\newtheorem{prop}{Proposition}
\newtheorem{theorem}[prop]{Theorem}
\newtheorem{cor}[prop]{Corollary}
\theoremstyle{definition}
\newtheorem{ex}{Example}
\newtheorem*{mydef}{Definition}
\newtheorem*{lemma*}{Lemma}
\crefname{prop}{proposition}{propositions}
\crefname{cor}{corollary}{corollaries}
\newcommand{\N}{\mathbb{N}}
\newcommand{\C}{\mathbb{C}}
\newcommand{\m}{\mathfrak{m}}
\newcommand{\n}{\mathfrak{n}}
\newcommand{\p}{\mathfrak{p}}
\newcommand{\q}{\mathfrak{q}}
\DeclareMathOperator{\Min}{Min}
\DeclareMathOperator{\Spec}{Spec}
\DeclareMathOperator{\mSpec}{mSpec}
\DeclareMathOperator{\Ht}{ht}
\DeclareMathOperator{\ara}{ara}
\DeclareMathOperator{\rad}{Rad}
\DeclareMathOperator{\Cl}{Cl}
\setlist[enumerate,1]{label={\upshape(\alph*)}}
\begin{document}
\title{Infinite prime avoidance}
\author{Justin Chen}
\address{Department of Mathematics, University of California, Berkeley,
California, 94720 U.S.A}
\email{jchen@math.berkeley.edu}

\subjclass[2010]{{13A15, 13B30}}

\begin{abstract}
We investigate prime avoidance for an arbitrary set of prime ideals in a 
commutative ring. Various necessary and/or sufficient conditions for prime 
avoidance are given, which yield natural 
classes of infinite sets of primes that satisfy prime avoidance.
Examples and counterexamples are given 
throughout to illustrate the phenomena that can occur. 
As an application, we show how to use prime avoidance to construct 
counterexamples among rings essentially of finite type.
\end{abstract}

\maketitle

The classical prime avoidance lemma is one of the most
ubiquitous results in commutative algebra. Prime avoidance, along with finiteness of 
associated primes, is one of the basic building blocks of the theory of Noetherian rings. 
For example, the two results can be jointly used to choose generic nonzerodivisors 
(such as in the converse of Krull's Altitude Theorem), or to select a single annihilator 
for an ideal consisting of zerodivisors. 

As a fundamental technical result, the prime avoidance lemma has found various extensions
in the literature (cf. \cite{HH}, \cite{SV}). Moreover, special cases of infinite prime avoidance 
have in the past been used to great effect, perhaps most famously as a crucial step in 
Nagata's example of an infinite-dimensional Noetherian ring. This indicates the potential utility 
of understanding and applying infinite prime avoidance methodically. The goal of this note is to 
make initial steps in this direction. To this end, we first make a definition. For a commutative 
ring $R$ with $1 \ne 0$, $\Spec R$ denotes the set (for now) of prime ideals of $R$.

\begin{mydef}
Let $R$ be a ring, $\Lambda \subseteq \Spec R$. 
We say that $\Lambda$ \textit{satisfies prime avoidance} if 
$I \subseteq \bigcup_{\p \in \Lambda} \p \implies I \subseteq \p$ for some $\p \in \Lambda$,
for any $R$-ideal $I$.
\end{mydef}

Note that in the definition of prime avoidance, it is enough
to check the condition for prime ideals $I$, since ideals which are maximal with 
respect to being contained in $\bigcup_{\p \in \Lambda} \p$ (i.e. not meeting the
multiplicative set $R \setminus \bigcup_{\p \in \Lambda} \p$) exist by Zorn's lemma, 
and are prime. 

\begin{ex} \label{maxEx}
For any ring $R$, the set of maximal ideals $\mSpec R$ satisfies prime avoidance: if 
$I \subseteq \bigcup_{\m \in \mSpec R} \m$, then $I$ consists of nonunits, hence is 
contained in a maximal ideal. This example, though basic, is actually representative 
of all examples in some sense: cf. \Cref{mainThm}(3), (6).
\end{ex}

We now arrive at the classical prime avoidance lemma. 
For convenience we give a short direct proof (as opposed to one using induction):

\begin{lemma*}[Prime Avoidance] \label{palemma}
Let $R$ be a ring, $\Lambda \subseteq \Spec R$. If $\Lambda$ is finite, then 
$\Lambda$ satisfies prime avoidance.
\end{lemma*}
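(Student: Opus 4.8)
The plan is to prove the contrapositive by an explicit construction, avoiding the usual induction on $\lvert\Lambda\rvert$ entirely. Write $\Lambda = \{\p_1, \dots, \p_n\}$ and suppose $I \not\subseteq \p_i$ for every $i$; I will produce a single element $x \in I$ lying outside every $\p_i$, which shows $I \not\subseteq \bigcup_{i} \p_i$.

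First I would reduce to the case that the $\p_i$ are pairwise incomparable. If $\p_i \subseteq \p_j$ for some $i \neq j$, then $\p_i$ contributes nothing to the union $\bigcup_i \p_i$ and may be discarded without affecting either the hypothesis $I \not\subseteq \p_k$ or the conclusion; iterating, I may assume $\p_i \not\subseteq \p_j$ for all $i \neq j$.

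The key step is to choose, for each $i$, two elements: $a_i \in I \setminus \p_i$, which exists by hypothesis, and $s_i \in \bigcap_{j \neq i} \p_j$ with $s_i \notin \p_i$. The latter exists because $\prod_{j\neq i}\p_j \subseteq \bigcap_{j \neq i}\p_j$, so if this intersection were contained in the prime $\p_i$ then some factor would satisfy $\p_j \subseteq \p_i$, contradicting incomparability. I then set
\[
x = \sum_{i=1}^n a_i s_i \in I.
\]
I expect verifying $x \notin \p_k$ for each $k$ to be the crux of the argument: for $i \neq k$ one has $s_i \in \p_k$, so every term but the $k$-th lies in $\p_k$, whereas the $k$-th term $a_k s_k$ avoids $\p_k$ precisely because $\p_k$ is prime and neither $a_k$ nor $s_k$ lies in it. Hence $x \equiv a_k s_k \not\equiv 0 \pmod{\p_k}$ for every $k$, giving $x \in I \setminus \bigcup_k \p_k$, the desired contradiction. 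The one point demanding care is the existence of the $s_i$, and this is exactly where primality of the $\p_j$ (rather than their being arbitrary ideals) is essential.
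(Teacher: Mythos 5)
Your proposal is correct and takes essentially the same route as the paper's proof: both avoid induction by discarding redundant primes, choosing $a_i \in I \setminus \p_i$, and forming a sum $\sum_i a_i s_i$ whose $k$-th term lies outside $\p_k$ while all other terms lie inside it. The only real difference is how the switch elements are produced --- the paper takes $s_i = \prod_{j \neq i} b_j$ with $b_j \in \p_j \setminus \bigcup_{k \neq j} \p_k$ (after removing primes redundant for the union), whereas you obtain $s_i \in \bigl(\bigcap_{j \neq i} \p_j\bigr) \setminus \p_i$ from pairwise incomparability via the fact that a prime containing a product of ideals contains a factor; both invoke primality at the same point, so this is a variant of the same argument rather than a different one.
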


\begin{proof}
Write $\Lambda = \{ \p_1, \ldots, \p_n \}$. Suppose $I$ is an $R$-ideal such that 
$I \not \subseteq \p_i$ for any $i$, and choose $a_i \in I \setminus \p_i$. 
Removing redundant primes for the union, we may choose 
$b_i \in \p_i \setminus \bigcup_{j \neq i} \, \p_j$ for each $i$. Set 
$c_i := a_i \prod_{j \ne i} b_j$. Then $c_i \in \p_j$ iff $j \ne i$ by primeness 
of $\p_i$, so $c_1 + \ldots + c_n \in I \setminus \bigcup \p_i$.
\end{proof}

General though prime avoidance is, its single restriction is quite severe: the set 
$\Lambda$ must be finite! The proof above offers no recourse to relaxing this 
constraint. But it is not without good reason that this is the case, as 
prime avoidance may simply fail when $\Lambda$ is infinite,
even for sets of minimal primes:

\begin{ex} \label{nonNoethEx}
Let $k$ be a field, $R = k[x_0, x_1, \ldots]/(x_{2i}x_{2i+1} \mid i \ge 0)$. Then the
set of minimal primes $\Min(R)$ has cardinality $2^{\aleph_0}$: every minimal 
prime is of the form $(x_{a(i)} \mid i \ge 0)$ for a sequence $\{ a(i) \}_{i \ge 0}$ 
with $a(i) \in \{ 2i, 2i+1 \}$. Let $\p_{odd} := (x_1, x_3, \ldots)$
be the minimal prime of odd variables. 
Then $\p_{odd} \subseteq \bigcup_{\p \in \Min(R) \setminus \{\p_{odd}\}} \p$.

To see this, pick $f \in \p_{odd}$. Write $f$ as an $R$-linear combination of finitely
many generators of $\p_{odd}$, say $x_1, x_3, \ldots, x_{2j-1}$. Then e.g. $(x_1, 
x_3, \ldots, x_{2j-1}, x_{2j}, x_{2j+2}, \ldots)$
is a minimal prime of $R$ containing $f$ which is distinct from $\p_{odd}$.

By similar reasoning, every minimal prime of $R$ is contained in the union of the 
other minimal primes. We remark that in this ring, the set of all minimal primes 
does satisfy prime avoidance, but even this need
not hold in general: there exist reduced rings of dimension $> 0$ where every 
nonzerodivisor is a unit. 
\end{ex}

Even in much tamer rings, infinite prime avoidance need not hold. For instance,
Noetherian rings have only finitely many minimal primes, which prevents minimal 
primes from (mis)behaving as in \Cref{nonNoethEx}. However, in this setting the 
principal ideal theorem can sometimes force infinite prime avoidance to fail:

\begin{prop} \label{counterEx2}
Let $R$ be a Noetherian ring.

\begin{enumerate}
\item For any $\q \in \Spec R$, $\q \subseteq \bigcup_{\Ht \p \le 1} \p$ (so 
prime avoidance fails if $\Ht \q \ge 2$).

\item Suppose $R$ is also Jacobson. Then for any $\m \in \mSpec R$ with 
$\Ht \m \ge 2$, $\m \subseteq \bigcup_{\n \in \mSpec(R) \setminus \{\m\}} \n$.
\end{enumerate}
\end{prop}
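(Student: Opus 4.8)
The plan is to prove both parts by reducing, through Krull's principal ideal theorem, to the single observation that every element of $R$ lies in some prime of height at most $1$.

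First I would handle (a). Fix $\q \in \Spec R$ and take an arbitrary $x \in \q$. Choosing a prime $\p$ minimal over the principal ideal $(x)$, Krull's principal ideal theorem gives $\Ht \p \le 1$, while trivially $x \in (x) \subseteq \p$ (when $x = 0$ one simply takes $\p$ to be a minimal prime of $R$, of height $0$). Thus every $x \in \q$ lies in a prime of height $\le 1$, i.e. $\q \subseteq \bigcup_{\Ht \p \le 1}\p$. The one point I would be careful about here is not to over-constrain the construction: I do not require $\p \subseteq \q$, since the covering family ranges over all low-height primes rather than only those below $\q$. To read off the failure of prime avoidance when $\Ht \q \ge 2$, I take $\q$ itself as the test ideal $I$: it is contained in the union, yet $\q \subseteq \p$ with $\Ht \p \le 1$ would force $\Ht \q \le 1$, so no member of the family contains $\q$.

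For (b) I would run the same construction one step more carefully. Fix $\m \in \mSpec R$ with $\Ht \m \ge 2$ and $x \in \m$, and now choose $\p$ minimal over $(x)$ with the extra condition $\p \subseteq \m$; this is possible because $\m \supseteq (x)$ must contain one of the (finitely many) minimal primes of $(x)$. As before $\Ht \p \le 1$, so $\Ht \p < \Ht \m$ forces the strict inclusion $\p \subsetneq \m$, and in particular $\p$ is not maximal.

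The hard part — and the only place the Jacobson hypothesis is used — is manufacturing a maximal ideal distinct from $\m$ that still contains $x$. For this I would invoke that in a Jacobson ring every prime equals the intersection of the maximal ideals containing it. Applied to $\p$, this says that if $\m$ were the unique maximal ideal over $\p$ then $\p = \m$, contradicting $\p \subsetneq \m$; hence there is a maximal $\n \ne \m$ with $\p \subseteq \n$, whence $x \in \p \subseteq \n$. Since $x \in \m$ was arbitrary, this yields $\m \subseteq \bigcup_{\n \in \mSpec(R)\setminus\{\m\}} \n$. I note that $\Ht \m \ge 2$ also guarantees $R$ is not local — a local Jacobson ring has dimension $0$ — so that the union on the right is over a nonempty family, as it must be.
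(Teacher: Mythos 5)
Your proof is correct and follows essentially the same route as the paper: part (a) is Krull's principal ideal theorem applied to a minimal prime of $(x)$, and part (b) additionally chooses that minimal prime inside $\m$ and uses the Jacobson property of $\p$ (intersection of maximal ideals) to produce a maximal ideal $\n \ne \m$ containing $x$. The extra care you take (the $x = 0$ case, why a minimal prime of $(x)$ inside $\m$ exists, and the non-locality remark) is sound but not needed beyond what the paper's argument already covers.
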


\begin{proof}
(a): Pick $f \in \q$, and take $\p$ a minimal prime of $f$. Then $f \in \p$, and by Krull's 
Principal Ideal Theorem, $\Ht \p \le 1$.

(b): Pick $f \in \m$, and let $\p$ be a minimal prime of $f$ contained in $\m$ (i.e. the pullback
to $R$ of a minimal prime of $(R/(f))_\m$). Now $\Ht \p \le 1 \implies \p \ne \m \implies \p$ 
is not maximal; hence $\p$ is a (necessarily infinite) intersection of maximal ideals (as $R$ is
Jacobson). Thus there is a maximal ideal $\n \ne \m$ with $\p \subseteq \n$, so $f \in \n$.
\end{proof}

In spite of these examples, one can still ask for classes of infinite sets of 
primes which do satisfy prime avoidance. It turns out that this question does have 
some nice answers. Recall now that $\Spec R$ has the Zariski topology, 
with closed sets of the form $V(I) := \{ \p \mid I \subseteq \p \}$ for an $R$-ideal $I$, 
and a ring map $\varphi : R \to S$ induces a continuous map
$\varphi^* : \Spec S \to \Spec R$ via contraction. 

\begin{prop} \label{pullbackLemma}
Let $\varphi : R \to S$ be a ring map, which is either a surjection or a localization. If 
$\Lambda \subseteq \Spec S$ satisfies prime avoidance, then so does $\varphi^*(\Lambda)$.
\end{prop}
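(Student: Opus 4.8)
The plan is to reduce prime avoidance for $\varphi^*(\Lambda)$ in $R$ to the assumed prime avoidance for $\Lambda$ in $S$ by passing to the extended ideal. Suppose $I$ is an $R$-ideal with $I \subseteq \bigcup_{\q \in \Lambda} \varphi^*(\q)$. Unwinding the definition of contraction, this says precisely that $\varphi(I) \subseteq \bigcup_{\q \in \Lambda} \q$, since $a \in I$ lies in $\varphi^{-1}(\q)$ exactly when $\varphi(a) \in \q$. The goal is to upgrade this to a statement about the genuine $S$-ideal $IS = \varphi(I)S$, namely $IS \subseteq \bigcup_{\q \in \Lambda} \q$. Once that is in hand, prime avoidance for $\Lambda$ produces some $\q \in \Lambda$ with $IS \subseteq \q$, and then $I \subseteq \varphi^{-1}(IS) \subseteq \varphi^{-1}(\q) = \varphi^*(\q)$, which is exactly the conclusion sought.

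The remaining content is therefore the inclusion $IS \subseteq \bigcup_{\q \in \Lambda} \q$, and this is where the two hypotheses on $\varphi$ enter. If $\varphi$ is surjective there is nothing to do: $\varphi(I)$ is already an ideal of $S$, so $IS = \varphi(I)$ and the inclusion follows at once from the previous paragraph. The localization case is the one requiring care, precisely because in general $IS$ is strictly larger than the mere image $\varphi(I)$, and it is not automatic that the extra elements remain in the union.

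For the localization case I would write $S = R_T$, so that every element of $IS$ can be presented as a fraction $x/t$ with $x \in I$ and $t \in T$ (as $I$ is an ideal, clearing denominators in a finite sum keeps the numerator in $I$). By the first paragraph $\varphi(x) = x/1$ lies in some $\q \in \Lambda$; since $1/t$ is a unit of $S$ and $\q$ is an ideal, $x/t = (x/1)(1/t) \in \q$ as well, whence $x/t \in \bigcup_{\q \in \Lambda} \q$. This gives $IS \subseteq \bigcup_{\q \in \Lambda} \q$ and completes the reduction.

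The only real obstacle is the bookkeeping in this last step: the observation that membership of a generator $x/1$ in a single prime $\q$ forces every associated fraction $x/t$ into that same $\q$. This is exactly the property that fails for a general ring map, and it explains why the hypothesis singles out the two special cases of surjections and localizations.
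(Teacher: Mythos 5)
Your proof is correct and follows essentially the same route as the paper: reduce to the extended ideal $IS$, show $IS \subseteq \bigcup_{\q \in \Lambda} \q$ using the special structure of surjections/localizations (the paper phrases this uniformly as ``every element of $IS$ is of the form $s\cdot\varphi(x)$ with $x \in I$,'' which is exactly your case analysis), then apply prime avoidance for $\Lambda$ and contract back.
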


\begin{proof}
Let $\p \subseteq \bigcup_{\q \in \Lambda} \varphi^{-1}(\q)$. Then for all $x \in \p$, 
$x \in \varphi^{-1}(\q)$ for some $\q \in \Lambda$, i.e. $\varphi(x) \in \q$. Since $\varphi$ is 
either a surjection or a localization, any element of $\p S$ is of the form $s \cdot \varphi(x)$ 
for some $x \in \p$, $s \in S$, so this shows that $\p S \subseteq \bigcup_{\q \in \Lambda} \q$.
By prime avoidance of $\Lambda$, $\p S \subseteq \q$ for some $\q \in \Lambda$, hence
$\p \subseteq \varphi^{-1}(\p S) \subseteq \varphi^{-1}(\q)$. 
\end{proof}

We use \Cref{pullbackLemma} to give examples of infinite sets satisfying
prime avoidance. Hereafter when convenient, we view $\Spec(U^{-1}R)$ 
inside $\Spec R$ as $\{ \p \mid \p \cap U = \emptyset \}$.

\begin{cor} \label{locCor}
Let $R$ be a ring, $U \subseteq R$ a multiplicative set, and $I$ an $R$-ideal.
Then $V(I) \cap \Spec(U^{-1}R)$ satisfies prime avoidance.
\end{cor}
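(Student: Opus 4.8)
The plan is to realize $\Lambda := V(I)\cap\Spec(U^{-1}R)$ as the contraction of the \emph{full} spectrum of a single localization of a quotient of $R$, and then to invoke \Cref{pullbackLemma}. The auxiliary ring is $T := U^{-1}(R/I)$, which is canonically isomorphic to $(U^{-1}R)/I(U^{-1}R)$, and the relevant map is the natural $\varphi\colon R\to T$. The point is that $\varphi$ factors as the surjection $R\twoheadrightarrow R/I$ followed by the localization $R/I\to U^{-1}(R/I)=T$ (equivalently, as the localization $R\to U^{-1}R$ followed by the surjection onto $(U^{-1}R)/I(U^{-1}R)$), so it is built precisely out of the two kinds of maps to which \Cref{pullbackLemma} applies.

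The one genuinely substantive step is the observation that the \emph{entire} set $\Spec T$---not merely $\mSpec T$---satisfies prime avoidance. This follows by the same reasoning as in \Cref{maxEx}: the union of all primes of $T$ is exactly the set of nonunits, so any ideal of $T$ contained in that union is proper and hence lies in some maximal ideal of $T$, which is itself a member of $\Spec T$. (More generally, any $\Lambda'$ with $\mSpec T\subseteq\Lambda'\subseteq\Spec T$ satisfies prime avoidance.)

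Granting this, I would apply \Cref{pullbackLemma} once to the localization $R/I\to T$ and once to the surjection $R\twoheadrightarrow R/I$, concluding that $\varphi^*(\Spec T)$ satisfies prime avoidance. It then remains to identify $\varphi^*(\Spec T)$ with $\Lambda$: under $\Spec(R/I)\cong V(I)$ and the standard description of $\Spec T$ as the primes of $R/I$ disjoint from the image of $U$, a prime $\p$ of $R$ lies in $\varphi^*(\Spec T)$ exactly when $I\subseteq\p$ and $\p\cap U=\emptyset$, i.e.\ exactly when $\p\in V(I)\cap\Spec(U^{-1}R)$. (The degenerate case $I\,U^{-1}R=U^{-1}R$ gives $T=0$, whence $\Spec T=\Lambda=\emptyset$ and prime avoidance holds vacuously; I would note this at the start.) The main obstacle is really this last bookkeeping together with the insistence on using all of $\Spec T$ rather than $\mSpec T$---contracting only the maximal ideals would produce a proper subset of $\Lambda$ and would not suffice.
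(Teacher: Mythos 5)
Your proposal is correct and is essentially the paper's own proof: both take $\varphi\colon R \to U^{-1}(R/I)$, factor it into a surjection and a localization, apply \Cref{pullbackLemma} twice, and identify $\varphi^*(\Spec(U^{-1}(R/I)))$ with $V(I) \cap \Spec(U^{-1}R)$. The only difference is that you spell out the (correct) details the paper leaves implicit, namely that the full spectrum of $U^{-1}(R/I)$ satisfies prime avoidance by the reasoning of \Cref{maxEx}, and the bookkeeping behind the identification of the contracted set.
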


\begin{proof}
$\varphi : R \to U^{-1}(R/I)$ is a composite of localizations and 
surjections. Now apply \Cref{pullbackLemma} twice to
$V(I) \cap \Spec(U^{-1}R) = \varphi^*(\Spec(U^{-1}(R/I)))$.

Notice: this shows that both $V(I)$ and $\Spec(U^{-1}R)$ satisfy prime
avoidance (by taking $U = \{1\}$ and $I = 0$, respectively).
In addition, pulling back $\mSpec(U^{-1}(R/I))$ above gives that 
$V(I) \cap \mSpec(U^{-1}R) = \varphi^*(\mSpec(U^{-1}(R/I)))$ satisfies prime avoidance.
\end{proof}

\begin{ex} \label{epiCounterEx}
\Cref{pullbackLemma} may lead one to think that $\varphi^*(\Spec S)$ satisfies 
prime avoidance for any ring epimorphism $\varphi : R \to S$, but this is not true. Let 
$k = \overline{k}$ be a field, $\widetilde{R} = k[s,t,u]$, $S = k[x,y]$, and define 
$\widetilde{\varphi} : \widetilde{R} \to S$ by $s \mapsto x, t \mapsto xy, u \mapsto 
xy^2 - y$. Then $\widetilde{\varphi}$ induces $\varphi : R := \widetilde{R}/(su - t^2 + t) 
\to S$, which is a ring epimorphism. Since $R \cong k[x, xy, xy^2 - y] \subseteq S$, 
any nonunit in $R$ is also a nonunit in $S$. Thus if $\Lambda := \varphi^*(\Spec S)$, 
then $\m \subseteq \bigcup_{\p \in \Lambda} \p$ for any $\m \in \mSpec R$. However, 
$(s, t - 1, u)$ is a maximal ideal of $R$ that is not in $\Lambda$: if 
$s \in \varphi^{-1}(x - a, y - b)$, then $a = 0$, and then $\varphi(t) = xy 
= x(y - b) + bx \in (x, y - b) \implies t \in \varphi^{-1}(x, y - b)$.
\end{ex}

\begin{ex} \label{setOps}
It follows from \Cref{locCor} that basic Zariski-open sets (i.e. sets of 
the form $D(f) := (\Spec R) \setminus V(f)$ for some $f \in R$) satisfy prime avoidance. 
However, arbitrary Zariski-open sets need not: 
if $R = k[x,y]$ for $k$ a field, $\Lambda_1 := D(x)$, $\Lambda_2 := D(y)$, then $\Lambda_1 
\cup \Lambda_2 = (\Spec R) \setminus \{ (x, y) \}$ does not satisfy prime avoidance, by
\Cref{counterEx2}(b). This example also shows that the class of sets satisfying prime 
avoidance is neither closed under union nor taking complements in $\Spec R$.
\end{ex}

\begin{mydef}
Let $R$ be a ring. For $\Lambda \subseteq \Spec R$, define the following sets:

\begin{itemize}
\item $\Lambda_{max} := \{ \p \in \Lambda \mid \p \not \subseteq \q, 
\forall \q \in \Lambda \}$, the subset of maximal elements of $\Lambda$.
Notice: $\Lambda_{max}$ may be empty, even if $\Lambda$ is not!

\item $\Lambda_{cl} := \{ \q \in \Spec R \mid \exists \p \in \Lambda, 
\q \subseteq \p \}$, the downward-closure of $\Lambda$ in the poset 
$\Spec R$. Notice: $(\cdot)_{cl}$ is 
a closure operation (i.e. monotonic, increasing, and idempotent). Indeed,
$\Lambda_{cl} = \bigcup_{\p \in \Lambda} (\{\p\}_{cl}) = \bigcup_{\p \in \Lambda}
\Spec(R_\p)$.
\end{itemize}
\end{mydef}

These definitions allow for various characterizations of prime avoidance. 
For a ring map $\varphi : R \to S$, we say that $\varphi^*$ 
is surjective on closed points if $\mSpec R \subseteq \varphi^*(\Spec S)$ (or 
equivalently, $\mSpec R \subseteq \varphi^*(\mSpec S)$). In the following, keep in 
mind that although $W^{-1}I \subseteq W^{-1}J$ does not imply that $I \subseteq J$ 
in general, the implication does hold if $J$ is prime (and does not meet $W$).

\begin{theorem} \label{mainThm}
Let $R$ be a ring, $\Lambda \subseteq \Spec R$, 
$W := R \setminus \bigcup_{\p \in \Lambda} \p$. Then the following are equivalent:

\begin{enumerate}[label={\upshape(\arabic*)}]
\item $\Lambda$ satisfies prime avoidance

\item $\mSpec(W^{-1}R) \subseteq \Lambda_{max}$

\item $\mSpec(W^{-1}R) = \Lambda_{max}$

\item $\mSpec(W^{-1}R) \subseteq \Lambda_{cl}$

\item $\Spec(W^{-1}R) = \Lambda_{cl}$

\item There is a ring map $\varphi : R \to S$ such that 
\begin{enumerate}[leftmargin=*, align=left, label={\upshape(\roman*)}]
\item $\Lambda_{max} = \varphi^*(\mSpec S)$ (so $\exists$ induced map
$W^{-1}R \to S$), and
\item $\Spec S \to \Spec(W^{-1}R)$ is surjective on closed points
\end{enumerate}

\item $\Lambda_{cl}$ satisfies prime avoidance

\item $\Lambda_{max}$ satisfies prime avoidance and 
$\Lambda \subseteq (\Lambda_{max})_{cl}$.
\end{enumerate}
\end{theorem}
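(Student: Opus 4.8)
The plan is to route all eight conditions through condition (5), using two structural facts about the poset $\Spec R$. First, since maximal ideals are exactly the maximal elements of the poset of primes, the identification $\Spec(W^{-1}R) = \{\p \in \Spec R \mid \p \subseteq \bigcup_{\q \in \Lambda} \q\}$ gives $\mSpec(W^{-1}R) = (\Spec(W^{-1}R))_{max}$. Second, because $\Lambda_{cl}$ is the downward closure of $\Lambda$, its maximal elements are exactly those of $\Lambda$, i.e. $(\Lambda_{cl})_{max} = \Lambda_{max}$. I would record both of these small lemmas at the outset, together with the remarks that $\bigcup_{\q \in \Lambda} \q = \bigcup_{\q \in \Lambda_{cl}} \q$ (so $W$ is unchanged upon replacing $\Lambda$ by $\Lambda_{cl}$) and that $\Lambda_{cl} \subseteq \Spec(W^{-1}R)$ always.

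The anchor is (1) $\iff$ (5). Using the observation made just after the Definition, that prime avoidance need only be checked on prime ideals $I$, condition (1) unwinds to: every $\p \in \Spec(W^{-1}R)$ is contained in some $\q \in \Lambda$, i.e. $\Spec(W^{-1}R) \subseteq \Lambda_{cl}$; together with the always-true reverse inclusion this is exactly (5). I would then close the loop (5) $\Rightarrow$ (3) $\Rightarrow$ (2) $\Rightarrow$ (4) $\Rightarrow$ (5): the first implication is taking maximal elements of both sides of (5) and invoking the two structural lemmas; (3) $\Rightarrow$ (2) is trivial; (2) $\Rightarrow$ (4) is the chain $\mSpec(W^{-1}R) \subseteq \Lambda_{max} \subseteq \Lambda \subseteq \Lambda_{cl}$; and (4) $\Rightarrow$ (5) follows since any prime of $W^{-1}R$ sits below a maximal ideal, which (4) places in $\Lambda_{cl}$, hence below an element of $\Lambda$.

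For (7), note that $W$ is unchanged and $\Lambda_{cl}$ is fixed on passing from $\Lambda$ to $\Lambda_{cl}$ (idempotence), so applying the already-established (1) $\iff$ (5) to $\Lambda_{cl}$ in place of $\Lambda$ yields (7) $\iff$ (5) immediately. For (6) I would prove (3) $\Rightarrow$ (6) by taking $S = W^{-1}R$ with $\varphi$ the localization map, for which (i) is (3) and (ii) is surjectivity of the identity; and (6) $\Rightarrow$ (4) by contraction: given $\m \in \mSpec(W^{-1}R)$, surjectivity on closed points writes $\m = \varphi^{-1}(\q')$ for some $\q' \in \Spec S$, and enlarging $\q'$ to a maximal ideal $\m'$ of $S$ gives $\m \subseteq \varphi^{-1}(\m') \in \Lambda_{max}$ by (i), so $\m \in \Lambda_{cl}$.

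The main obstacle is (8), where the two clauses must be used in tandem. The clause $\Lambda \subseteq (\Lambda_{max})_{cl}$ forces $\bigcup_{\p \in \Lambda_{max}} \p = \bigcup_{\p \in \Lambda} \p$, so that $\Lambda_{max}$ shares the same $W$ as $\Lambda$; without it $\Lambda_{max}$ could have a strictly smaller union (or be empty), and the phrase ``$\Lambda_{max}$ satisfies prime avoidance'' would refer to a different localization. Granting this union-agreement, applying (1) $\iff$ (5) to $\Lambda_{max}$ shows $\Lambda_{max}$ satisfies prime avoidance iff $\Spec(W^{-1}R) = (\Lambda_{max})_{cl}$; combined with $(\Lambda_{max})_{cl} = \Lambda_{cl}$ (one inclusion from $\Lambda_{max} \subseteq \Lambda$, the other from $\Lambda \subseteq (\Lambda_{max})_{cl}$ and idempotence) this is precisely (5). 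Conversely, assuming (5) I would verify both clauses of (8): the inclusion $\Lambda \subseteq (\Lambda_{max})_{cl}$ holds because every prime in $\Spec(W^{-1}R) = \Lambda_{cl}$ lies below a maximal ideal, which is an element of $\Lambda_{max}$ by (3); and $\Lambda_{max}$ satisfies prime avoidance by the same union-agreement plus (1) $\iff$ (5). Keeping careful track of which $W$ each application of the anchor equivalence refers to is the one genuinely error-prone point.
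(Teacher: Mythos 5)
Your proof is correct, but it is organized genuinely differently from the paper's. The paper establishes the cycle (1) $\Rightarrow$ (2) $\Rightarrow$ (3) $\Rightarrow$ (4) $\Rightarrow$ (5) $\Rightarrow$ (7) $\iff$ (1), using Zorn's lemma for (1) $\Rightarrow$ (2), a further-localization argument ($\p R_\q \subseteq \q R_\q \implies \p \subseteq \q$) for (2) $\Rightarrow$ (3), and --- crucially --- \Cref{locCor} (hence \Cref{pullbackLemma}) both for (5) $\Rightarrow$ (7) and for the prime avoidance of $\Lambda_{max} = \mSpec(W^{-1}R)$ needed in (3)$+$(5) $\Rightarrow$ (8). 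You instead anchor everything at (1) $\iff$ (5), proved directly from the Zorn-based remark that prime avoidance need only be checked on prime ideals, and you replace the paper's localization arguments with two order-theoretic lemmas: $\mSpec(W^{-1}R) = (\Spec(W^{-1}R))_{max}$ and $(\Lambda_{cl})_{max} = \Lambda_{max}$, so that (5) $\Rightarrow$ (3) is literally ``take maximal elements of both sides.'' This makes your proof self-contained --- it never invokes \Cref{pullbackLemma} or \Cref{locCor} --- and it handles (7) and (8) uniformly by re-applying the anchor to $\Lambda_{cl}$ and $\Lambda_{max}$ respectively. The price is exactly the bookkeeping you flag: each re-application of the anchor concerns the multiplicative set attached to the new set of primes, so the union-agreement arguments (automatic for $\Lambda_{cl}$ by idempotence, forced by the clause $\Lambda \subseteq (\Lambda_{max})_{cl}$ for $\Lambda_{max}$) are genuinely necessary, and you supply them correctly; indeed your treatment of (8) is more explicit than the paper's ``(3) $+$ (5) $\Rightarrow$ (8): Clear,'' and your (6) $\Rightarrow$ (4) (using only containment of contractions, rather than equality) closes the loop just as well as the paper's (6) $\Rightarrow$ (2). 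In short, the paper's route buys brevity by leaning on previously established results, while yours buys independence from them and isolates the purely poset-theoretic content of the theorem.
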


\begin{proof}
(1) $\iff$ (7): $\bigcup_{\p \in \Lambda} \p = \bigcup_{\p' \in \Lambda_{cl}} \p'$, and 
$I \subseteq \p$ for some $\p \in \Lambda$ iff 
$I \subseteq \p'$ for some $\p' \in \Lambda_{cl}$.

(1) $\implies$ (2): Take $\m \in \mSpec(W^{-1}R)$. Then $\m = W^{-1}\q$ where $\q \in \Spec R$
is maximal with respect to $\q \cap W = \emptyset$. By prime avoidance, $\q \subseteq \p$
for some $\p \in \Lambda$. But $\p \cap W = \emptyset$, so $\q = \p \in \Lambda_{max}$ by 
maximality of $\q$. 

(2) $\implies$ (3): Take $\p \in \Lambda_{max}$. Then $W^{-1}\p$ is a proper ideal in $W^{-1}R$,
so $W^{-1}\p \subseteq \m$ for some maximal ideal $\m \in \mSpec(W^{-1}R)$. By assumption,
$\m = W^{-1}\q$ for some $\q \in \Lambda_{max}$. Localizing further at $\q$ gives 
$\p R_\q \subseteq \q R_\q$ which implies $\p \subseteq \q$, so by maximality of $\p$ in 
$\Lambda$, $\p = \q$, hence $W^{-1}\p = \m \in \mSpec(W^{-1}R)$.

(3) $\implies$ (4): Clear.

(4) $\implies$ (5): Follows from $\Lambda \subseteq \Spec(W^{-1}R) = 
(\mSpec(W^{-1}R))_{cl} \subseteq \Lambda_{cl}$.

(5) $\implies$ (7): Follows from \Cref{locCor}.

(3) $\implies$ (6): Take $S := W^{-1}R$, $\varphi : R \to S$ the canonical map. Then (i) 
follows from (3), and (ii) is automatic.

(6) $\implies$ (2): Clear.

(3) + (5) $\implies$ (8): Clear.

(8) $\implies$ (1): $\Lambda_{max} \subseteq \Lambda \subseteq (\Lambda_{max})_{cl} 
\implies (\Lambda_{max})_{cl} = \Lambda_{cl}$. Now apply (7).  \qedhere
\end{proof}

\Cref{mainThm}(7) implies in particular that prime avoidance is determined by the 
downward-closed subsets of $\Spec R$, and for downward-closed sets, prime avoidance 
behaves well with intersections:

\begin{prop}
Let $R$ be a ring, $\{\Lambda_i\}$ a collection of downward-closed sets in $\Spec R$ 
satisfying prime avoidance. Then $\Lambda := \bigcap \Lambda_i$ is also 
downward-closed and satisfies prime avoidance.
\end{prop}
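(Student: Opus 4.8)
The downward-closure of $\Lambda$ is immediate and I would dispose of it first: if $\q \in \Lambda$ and $\q' \subseteq \q$, then $\q \in \Lambda_i$ for every $i$, so $\q' \in \Lambda_i$ for every $i$ by downward-closure of each $\Lambda_i$, whence $\q' \in \bigcap \Lambda_i = \Lambda$. The substance of the claim is that $\Lambda$ satisfies prime avoidance, and the plan is to verify this through condition (5) of \Cref{mainThm} rather than checking the definition directly. Since $\Lambda$ is downward-closed we have $\Lambda_{cl} = \Lambda$, so the equivalence (1) $\iff$ (5) reduces the entire goal to proving the single equality $\Spec(W^{-1}R) = \Lambda$.

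The key is to unwind what condition (5) says for the hypotheses. Writing $W_i := R \setminus \bigcup_{\p \in \Lambda_i} \p$, each $\Lambda_i$ is downward-closed and satisfies prime avoidance, so $(\Lambda_i)_{cl} = \Lambda_i$ and the equivalence (1) $\iff$ (5) (applied to $\Lambda_i$) gives $\Spec(W_i^{-1}R) = \Lambda_i$. Under the convention identifying $\Spec(W_i^{-1}R)$ with $\{\p \mid \p \cap W_i = \emptyset\}$, this reads concretely: a prime lies in $\Lambda_i$ precisely when it is contained in the union $\bigcup_{\p \in \Lambda_i} \p$. I would record the analogous translation for $\Lambda$ itself, namely $\p \in \Spec(W^{-1}R)$ iff $\p \cap W = \emptyset$ iff $\p \subseteq \bigcup_{\q \in \Lambda} \q$.

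The two inclusions of $\Spec(W^{-1}R) = \Lambda$ then fall out. The inclusion $\Lambda \subseteq \Spec(W^{-1}R)$ is clear, since every $\p \in \Lambda$ is disjoint from $W$ by definition of $W$. For the reverse, the one point to observe is the monotonicity $W_i \subseteq W$: because $\Lambda \subseteq \Lambda_i$, the union $\bigcup_{\q \in \Lambda} \q$ is contained in $\bigcup_{\p \in \Lambda_i} \p$, so passing to complements gives $W_i \subseteq W$. Hence if $\p \in \Spec(W^{-1}R)$, i.e. $\p \cap W = \emptyset$, then $\p \cap W_i = \emptyset$, so $\p \in \Spec(W_i^{-1}R) = \Lambda_i$; as this holds for all $i$, $\p \in \bigcap \Lambda_i = \Lambda$. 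This establishes $\Spec(W^{-1}R) = \Lambda$, and \Cref{mainThm} finishes the proof. No step here is a genuine obstacle; the only real decision is to route the argument through the localization description in (5) — which converts the clumsy existential ``there is a $\p \in \Lambda$ with $I \subseteq \p$'' into a plain containment of primes in a union — after which the monotonicity $W_i \subseteq W$ does all the work.
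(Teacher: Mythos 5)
Your proof is correct, and it routes the argument differently from the paper. The paper's proof is a direct, two-line verification from the definition: given a prime $\q \subseteq \bigcup_{\p \in \Lambda} \p$, note $\bigcup_{\p \in \Lambda} \p \subseteq \bigcup_{\p \in \Lambda_i} \p$ for each $i$, apply prime avoidance of $\Lambda_i$ to get $\q \subseteq \p_i$ for some $\p_i \in \Lambda_i$, and use downward-closedness to conclude $\q \in (\Lambda_i)_{cl} = \Lambda_i$ for every $i$, hence $\q \in \Lambda$ (so in particular $\q$ is contained in a member of $\Lambda$, namely itself). You instead invoke the equivalence (1) $\iff$ (5) of \Cref{mainThm} twice: once per $\Lambda_i$ to translate the hypotheses into $\Spec(W_i^{-1}R) = \Lambda_i$, and once for $\Lambda$ to reduce the goal to $\Spec(W^{-1}R) = \Lambda$, which your monotonicity observation $W_i \subseteq W$ then delivers. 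This is legitimate and non-circular, since the paper's proof of (5) $\implies$ (1) goes through \Cref{locCor} and nowhere uses this proposition. What the comparison buys: the paper's argument is shorter and self-contained, needing only the definition and the remark that prime ideals $I$ suffice; yours makes explicit a clean structural fact, namely that the downward-closed sets satisfying prime avoidance are exactly the subsets of $\Spec R$ of the form $\Spec(U^{-1}R)$, and that this class is closed under arbitrary intersections. Note also that unwinding your proof essentially recovers the paper's: the equality $\Spec(W_i^{-1}R) = \Lambda_i$ encodes ``prime avoidance plus downward-closure of $\Lambda_i$ applied to a test prime,'' and $W_i \subseteq W$ is precisely the containment of unions the paper uses in its first line.
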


\begin{proof}
It is clear that $\Lambda$ is downward-closed. Let $\q \in \Spec R$, $\q \subseteq 
\bigcup_{\p \in \Lambda} \p \subseteq \bigcap_i \bigcup_{\p \in \Lambda_i} \p$. By 
prime avoidance of $\Lambda_i$, there exist $\p_i \in \Lambda_i$ such that $\q$ is 
contained in $\p_i$, for every $i$. Then $\q \in (\Lambda_i)_{cl} = \Lambda_i$ for 
every $i$, i.e. $\q \in \Lambda$.
\end{proof}

We can also give an analogue of \Cref{counterEx2}(b) in (co)dimension 1
(whose proof we postpone until after \Cref{allPAchar}):

\begin{prop} \label{DD}
Let $R$ be a Noetherian normal ring of dimension 1. 
\begin{enumerate}

\item For $\m \in \mSpec R$, $\mSpec(R) \setminus \{m\}$ satisfies prime avoidance
iff $[\m]$ is torsion in $\Cl R$ (the divisor class group of $R$).

\item Every $\Lambda \subseteq \Spec R$ satisfies prime avoidance iff $\Cl R$ is
a torsion group.

\end{enumerate}
\end{prop}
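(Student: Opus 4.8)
The plan is to reduce everything to the structure theory of Dedekind domains and then apply the criterion \Cref{mainThm}(5). A Noetherian normal domain of dimension $1$ is a Dedekind domain, so I will assume $R$ is such (the general normal dimension-one ring is a finite product of Dedekind domains and fields, handled componentwise). I will use three standard facts: $\Spec R = \{(0)\} \cup \mSpec R$ with every maximal ideal of height $1$; every nonzero ideal factors uniquely as a product of maximal ideals, so that for nonzero $x$ and maximal $\n$ one has $x \in \n$ iff $\n$ divides $(x)$; and the single bridge to the class group, namely that for a maximal ideal $\p$ the class $[\p]$ is torsion in $\Cl R$ iff $\p^k$ is principal for some $k \ge 1$.

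For (a), set $\Lambda := \mSpec(R) \setminus \{\m\}$ and $W := R \setminus \bigcup_{\n \in \Lambda} \n$. Since $\dim R = 1$ we have $\Lambda_{cl} = \Spec(R) \setminus \{\m\}$, and by \Cref{mainThm}(5) prime avoidance for $\Lambda$ is equivalent to $\Spec(W^{-1}R) = \Lambda_{cl}$. As $(0)$ and each $\n \in \Lambda$ meet $W$ trivially, this equality reduces to the single condition $\m \notin \Spec(W^{-1}R)$, i.e. $\m \cap W \ne \emptyset$. I would then identify $W$ explicitly: a nonzero $x$ lies in $W$ exactly when no maximal ideal other than $\m$ divides $(x)$, that is, when $(x) = \m^k$ for some $k \ge 0$. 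Hence $\m \cap W \ne \emptyset$ iff $\m^k$ is principal for some $k \ge 1$, which is precisely the torsion of $[\m]$.

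For (b), the forward direction is immediate from (a): if every $\Lambda$ satisfies prime avoidance then $[\m]$ is torsion for every $\m \in \mSpec R$, and since the torsion elements of the abelian group $\Cl R$ form a subgroup while $\Cl R$ is generated by the classes of maximal ideals, $\Cl R$ is torsion. For the converse, suppose $\Cl R$ is torsion and let $\Lambda \subseteq \Spec R$ be arbitrary. By \Cref{mainThm}(7) it suffices to treat $\Lambda_{cl}$, whose only nontrivial shape is $\{(0)\} \cup S$ for some $S \subseteq \mSpec R$; writing $W := R \setminus \bigcup_{\n \in S} \n$, I must show $\Spec(W^{-1}R) = \{(0)\} \cup S$. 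The points $(0)$ and $\n \in S$ clearly lie in $\Spec(W^{-1}R)$, so the content is that every $\n \notin S$ is excluded, i.e. $\n \cap W \ne \emptyset$. Here torsion enters: choosing $k \ge 1$ with $\n^k = (x)$ principal, the element $x$ lies in $\n$ but in no maximal ideal different from $\n$, so $x \in \n \cap W$. This gives the desired equality, and \Cref{mainThm}(5) concludes.

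The main obstacle is the infinite case of (b) together with the ``only if'' in (a): the finite prime avoidance lemma is useless here, and the entire difficulty is concentrated in producing, or obstructing, a single element of $\n \cap W$. The clean translation is that such an element forces $(x)$ to be supported only at $\n$, hence to be a power of $\n$, so its existence is exactly the global, class-group-theoretic statement that some $\n^k$ is principal. Making this dictionary precise --- including the bookkeeping that $(0)$ and the primes of $S$ never meet $W$, so that only maximal ideals outside $S$ are at issue --- is the crux; once it is in place, both parts follow from \Cref{mainThm}(5) and the fact that torsion classes form a subgroup.
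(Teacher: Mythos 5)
Your proof is correct, but it routes through a different one of the paper's general results. The paper proves \Cref{DD} as a corollary of \Cref{allPAchar}: it records the dictionary that for a Dedekind domain and $\m \in \mSpec R$, $\ara \m = 1$ iff $[\m]$ is torsion (by unique factorization, $\sqrt{(x)} = \m \iff (x) = \m^n$), decomposes a Noetherian normal ring of dimension $1$ as a finite product of Dedekind domains and fields, and then cites the equivalence in \Cref{allPAchar} between prime avoidance for the sets $(\Spec R) \setminus V(\q)$ and the condition $\ara \q \le 1$. You bypass \Cref{allPAchar} entirely and argue through \Cref{mainThm}(5) and (7): you identify the multiplicative set $W$ explicitly via unique factorization ($x \in W$ iff $(x) = \m^k$ for some $k \ge 0$), and reduce both parts to the single condition $\m \cap W \ne \emptyset$, i.e.\ that some $\m^k$ is principal. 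The underlying arithmetic is identical --- in both arguments the witness is an element $x$ with $(x) = \m^k$, which lies in $\m$ and in no other maximal ideal --- so the difference is only in which bridge between that element and prime avoidance is used. Your route has a small technical advantage for part (a): \Cref{mainThm} is a statement about a single set $\Lambda$, so the claim about one maximal ideal $\m$ follows directly, whereas \Cref{allPAchar} as stated is a global (``for all $\q$'') equivalence, and extracting part (a) from it requires noting that the relevant implications in its proof localize to a single prime (they do, but the paper leaves this implicit). The paper's route buys brevity --- its proof is two sentences given \Cref{allPAchar} --- and places the result in the context of compactly-packed rings, where arithmetic rank is the natural invariant. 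One cosmetic nitpick: in the degenerate case where $R$ is local (a DVR), your $\Lambda = \mSpec(R) \setminus \{\m\}$ is empty and $\Lambda_{cl} = \emptyset$ rather than $\Spec(R) \setminus \{\m\} = \{(0)\}$; both sides of the equivalence hold trivially there, so nothing breaks, but the identification of $\Lambda_{cl}$ should be stated for non-local $R$.
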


\Cref{DD}(b) naturally leads one to ask: what are the rings such that every set of primes 
satisfy prime avoidance? Such rings were introduced under the name of
compactly-packed (C.P.) rings in \cite{RV}, and have been fairly well-studied, e.g. in 
\cite{SW}, \cite{PS}. The condition which replaces torsion in the class group turns out 
to be that of arithmetic rank 1.
Recall that the arithmetic rank of an ideal $I$ is defined as
$\ara I := \inf\{n \mid \exists x_1, \ldots, x_n \in R, \sqrt{(x_1, \ldots, x_n)} = \sqrt{I} \}$. 

\begin{prop} \label{allPAchar}
Let $R$ be a ring. Then the following are equivalent:

\begin{enumerate}[label={\upshape(\arabic*)}]

\item For all $\Lambda \subseteq \Spec R$, $\Lambda$ satisfies prime avoidance

\item For all downward-closed $\Lambda \subseteq \Spec R$, $\Lambda$ satisfies 
prime avoidance

\item For all Zariski-open sets $U \subseteq \Spec R$, $U$ satisfies prime avoidance

\item For all $\q \in \Spec R$, $(\Spec R) \setminus V(\q)$ satisfies prime avoidance

\item For all $\q \in \Spec R$, $\ara \q \le 1$.
\end{enumerate}
\end{prop}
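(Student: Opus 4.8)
The plan is to establish the cycle $(1)\Rightarrow(2)\Rightarrow(3)\Rightarrow(4)\Rightarrow(5)\Rightarrow(1)$, in which three implications are merely inclusions of special cases and all the content sits in $(4)\Rightarrow(5)$ and $(5)\Rightarrow(1)$. The implication $(1)\Rightarrow(2)$ is immediate, and $(3)\Rightarrow(4)$ holds since $(\Spec R)\setminus V(\q)$ is by definition Zariski-open. For $(2)\Rightarrow(3)$ I would first record the easy observation that every Zariski-open set $U=(\Spec R)\setminus V(I)$ is downward-closed: if $\p\in U$ and $\q\subseteq\p$, then $I\subseteq\q$ would force $I\subseteq\p$, contradicting $\p\in U$, so $\q\in U$. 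Thus (3) is just (2) restricted to open sets.

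The heart of the matter is the equivalence between prime avoidance for the single open set $(\Spec R)\setminus V(\q)$ and the numerical condition $\ara\q\le 1$, and both remaining implications run through the same elementary observation: $\ara\q\le 1$ holds if and only if there is an element $x\in\q$ with $\sqrt{(x)}=\sqrt{\q}$. Indeed, given any $y$ with $\sqrt{(y)}=\sqrt{\q}$, some power $y^n$ lies in $\q$ and still satisfies $\sqrt{(y^n)}=\sqrt{\q}$, so one may take $x=y^n$. Geometrically this says $(\Spec R)\setminus V(\q)$ is the basic open set $D(x)$.

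For $(4)\Rightarrow(5)$ I would fix $\q$ and set $U=(\Spec R)\setminus V(\q)=\{\p\mid\q\not\subseteq\p\}$, then apply prime avoidance to the ideal $\q$ itself. If $\q\subseteq\bigcup_{\p\in U}\p$, prime avoidance would produce $\p\in U$ with $\q\subseteq\p$, contradicting the definition of $U$; hence some $x\in\q$ lies outside $\bigcup_{\p\in U}\p$. That $x$ escapes the union means every prime containing $x$ contains $\q$, i.e.\ $V(x)\subseteq V(\q)$, equivalently $\q\subseteq\sqrt{(x)}$; combined with $x\in\q$ this gives $\sqrt{(x)}=\sqrt{\q}$, so $\ara\q\le 1$. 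For $(5)\Rightarrow(1)$, given an arbitrary $\Lambda$ it suffices, by the reduction noted just after the definition, to check prime ideals $\q\subseteq\bigcup_{\p\in\Lambda}\p$. Picking $x\in\q$ with $\sqrt{(x)}=\sqrt{\q}$ as above, the membership $x\in\bigcup_{\p\in\Lambda}\p$ places $x$ in some $\p_0\in\Lambda$, and then $\q\subseteq\sqrt{\q}=\sqrt{(x)}\subseteq\p_0$, as desired.

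The only genuine obstacle is the first move of $(4)\Rightarrow(5)$: recognizing that prime avoidance should be tested on $\q$ itself, in order to extract a single element cutting out $V(\q)$ up to radical. Once that element is produced, the remaining steps are routine radical bookkeeping, and $(5)\Rightarrow(1)$ reuses exactly the same element. One could alternatively deduce $(5)\Rightarrow(1)$ from the fact that basic open sets satisfy prime avoidance (\Cref{setOps}), but the direct argument above is shorter and self-contained.
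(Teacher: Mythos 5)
Your proof is correct and follows essentially the same route as the paper: the cycle $(1)\Rightarrow(2)\Rightarrow(3)\Rightarrow(4)\Rightarrow(5)\Rightarrow(1)$, with $(4)\Rightarrow(5)$ proved by testing prime avoidance on $\q$ itself to extract $x \in \q$ with $\sqrt{(x)} = \q$, and $(5)\Rightarrow(1)$ by reusing such an element. The only differences are cosmetic --- you spell out why open sets are downward-closed (which the paper marks ``clear''), and your power-of-$y$ argument is unnecessary since any $y$ with $\sqrt{(y)} = \q$ already lies in $\sqrt{(y)} = \q$.
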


\begin{proof}
(1) $\implies$ (2) $\implies$ (3) $\implies$ (4): Clear.

(4) $\implies$ (5): Let $\q \in \Spec R$. Then $\q \not \subseteq \p$ for all $\p \not 
\in V(\q)$, so by prime avoidance $\q \not \subseteq 
\bigcup_{\p \not \in V(\q)} \p$. Thus there is $x \in \q \setminus
\bigcup_{\p \not \in V(\q)} \p$, and such an $x$ has $\q$ as its 
only minimal prime (if $x \in \p$ for some $\p \in \Spec R$, then $\p \in V(\q)$), i.e. 
$\sqrt{(x)} = \q$.

(5) $\implies$ (1): Let $\q \in \Spec R$, $\q \subseteq \bigcup_{\p \in \Lambda} \p$. 
By hypothesis $\q = \sqrt{(x)}$ for some $x \in R$. Then $x \in \p$ for some 
$\p \in \Lambda \implies \q = \sqrt{(x)} \subseteq \p$.
\end{proof}

\begin{proof}[Proof of \Cref{DD}]
If $R$ is Dedekind and $\m \in \mSpec R$, then $\ara \m = 1$ iff $[\m]$ is torsion in
$\Cl R$: by unique factorization of ideals, $\sqrt{(x)} = \m \iff (x) = \m^n$ for some
$n \in \N$. 
If now $R$ is any Noetherian normal ring of dimension 1, then $R$ is a finite product
of Dedekind domains and fields, so the above reasoning, along with $(1) \iff (4)$
in \Cref{allPAchar}, gives (a) and (b).
\end{proof}

It is shown in \cite{PS} that if $R$ is Noetherian, then (5) in \Cref{allPAchar} may be replaced
with ($5'$): For all $\m \in \mSpec R$, $\ara \m = 1$ (which implies $\dim R \le 1$, since
$\Ht I \le \ara I$ in a Noetherian ring). In other words, under these assumptions 
the minimal primes also have arithmetic rank 1. 
In general though, it is possible for a 
minimal prime to be contained in a union of height 1 primes not containing it:

\begin{ex} \label{nonDomainEx}
Let $k$ be a field, $R = k[x,y,z]/(xy,xz)$, and $\q := (y, z)$, the 
non-principal minimal prime of $R$. If $\Lambda =$ all height 1 primes not containing 
$\q$, then $\q \subseteq \bigcup_{\p \in \Lambda} \p$: to see this, take $0 \ne f \in \q$,
and let $f_1$ be an irreducible factor of $\overline{f}$ in $R/(x) \cong k[y,z]$. Then 
$(x, f_1)$ is a height 1 prime of $R$ containing $f$, but not $\q$. Together with the 
above reasoning, this shows that $\ara \q = 2$.
\end{ex}

There is another interesting characterization of the C.P. property for domains 
via overrings: a Dedekind domain $R$ is C.P. iff every overring
of $R$ (i.e. a ring $S$ with $R \subseteq S \subseteq \operatorname{Quot}(R)$) is 
a localization of $R$. Moreover, a Noetherian domain of dimension 1 is C.P. iff every 
sublocalization (i.e. an overring that is an intersection of localizations)
is a localization. See \cite{HR}, Corollaries 2.8 and 3.13 for more details.

\vspace{0.15cm}
It would also be remiss not to mention the geometric interpretation of prime 
avoidance, which is closer in spirit to the titular ``avoidance".
For an affine scheme $X = \Spec R$, a (prime) cycle in $X$ will mean an integral closed 
subscheme of $X$ (i.e. a subscheme of the form $V(\p)$ for some $\p \in \Spec R$). 
A set of cycles $\{ Z_i \}$ in $X$ satisfies prime avoidance iff for any cycle $Z$ not 
containing any $Z_i$, there is a hypersurface in $X$ containing $Z$ but not 
any $Z_i$. If the $Z_i$'s consist of closed points, then this may be restated as:
any cycle avoiding the $Z_i$ can be extended to a hypersurface avoiding the $Z_i$. 
One can use this to see that a set $\Lambda$ of closed points 
in $\mathbb{A}^2_k$ with $|\Lambda| < |k|$ satisfies prime avoidance:
if $p \not \in \Lambda$, then there are $\ge |k|$ lines through $p$, 
and $\le |\Lambda|$ of these can meet $\Lambda$. 
This includes e.g. any discrete ($=$ has no limit points)
set of points in $\mathbb{A}^2_{\mathbb{R}}$.


\vspace{0.2cm}
We conclude with some applications of the ideas of prime avoidance. In general, prime 
avoidance is a constraint on a set of primes which can be used to justify one's intuition 
about the set (one interpretation of \Cref{mainThm}(3) is that prime avoidance means 
there are no ``unexpected" closed points in the localization). In particular, prime avoidance
can be used to construct rings essentially of finite type satisfying given conditions.
Although the examples below are of independent interest,
we use prime avoidance to verify certain properties of each:

\begin{ex} \label{closedPtsEx}
We give an example of a reduced, connected
Noetherian affine scheme such that the closure of the closed points is a proper 
closed set of codimension 0. Algebraically, this is a Noetherian ring with no 
nilpotents or idempotents such that the Jacobson radical $\rad R$ is nonzero, but 
consists of zerodivisors. In other words, $\rad R$ lies strictly between 
the intersection and union of the minimal primes:
\[ \bigcap_{\p \in \Min R} \p \subsetneq 
\bigcap_{\m \in \mSpec R} \m \subsetneq
\bigcup_{\p \in \Min R} \p
\]
For the example: let $k = \overline{k}$ be a field, $T := k[x,y]/(xy)$, 
$\Lambda := V(x) \subseteq \Spec T$, $W := T \setminus 
\bigcup_{\p \in \Lambda} \p$, and $R := W^{-1}T$. 
By \Cref{locCor}, $\Lambda$ satisfies prime 
avoidance, so by \Cref{mainThm}, $\mSpec R = \{ W^{-1}(x, y - a) \mid a \in k \}$. 
Since $k$ is infinite, $\rad R = \bigcap_{a \in k} W^{-1}(x, y - a) = W^{-1}(x) \ne 0$,
and $x$ is a zerodivisor in $R$. 
\end{ex}







\begin{ex}
We give an example of a Jacobson ring $R$ with the property that 
every ring surjection $R \twoheadrightarrow S$ induces a surjection on units
$R^\times \twoheadrightarrow S^\times$. 
Let $T := \C[x]$, $\Lambda := \{ (x - n) \mid n \in \N\}$, 
$W := T \setminus \bigcup_{\p \in \Lambda} \p$, and $R := W^{-1}T$. Since 
$\Cl T = 0$, by \Cref{DD} every subset of $\Spec T$ satisfies prime avoidance, 
so by \Cref{mainThm} 
$\mSpec(R) = \{ W^{-1}(x - n) \mid n \in \N \}$,
hence $R$ is a 1-dimensional Jacobson PID.

If $\varphi : R \twoheadrightarrow S$ is surjective, set $I := \ker \varphi$. Then 
$I = (f)R = W^{-1}(f)$ for some $f \in T$. Suppose $\exists g \in T$ 
with $\frac{g}{1} \not \in R^\times$, but $\varphi \Big(\frac{g}{1} \Big) \in S^\times$. 
Since $S \cong R/I = W^{-1}T/W^{-1}(f) \cong T/(f)$, it suffices to 
show that $g + f_1 \in W$ for some $f_1 \in (f)$, i.e. $g + f_1$ has no
roots in $\N$. Since $f, g$ have no common roots, this is 
possible by taking $f_1 = cf^n$ where $c \in \C$, $n \in \N$ are 
such that $\deg f^n > \deg g$ and $|c| \gg 0$. 
\end{ex}

\vskip 2ex

\end{document}